\newcommand{\n}{\noindent}
\newtheorem{rem}{Remark}[section]
\newtheorem{prop}{Proposition}[section]
\begin{document}

\title{Performance analysis of Markovian queue with impatience and vacation times}
\author{Assia Boumahdaf \footnote{E-mail: assia.boumahdaf@etu.upmc.fr}\\
Laboratoire de Statistique Th\'eorique et Appliqu\'ee \\Universit\'e Pierre et Marie Curie, Paris, France}
\date{}
\maketitle

\begin{abstract}
We consider an $M/M/1$ queueing system with impatient customers with multiple and single vacations. It is assumed that customers are impatient whenever the state of the server. We derive the probability generating functions of the number of customers in the system and we obtain some performance measures.

\end{abstract}

\section{Introduction}
\label{Intro}

Queueing system with impatience are characterized by the fact that customers are not willing to wait as long as it is necessary to obtain service. Such models are encountered in many fields including real-time telecommunication systems in which data must be transmitted within a short time, call center in which customers hang up due to impatience before they are served.

Queueing models with impatient customers have been extensively studied in the past by various authors. The idea of impatient customers goes back to the pioneering work of Palm in $1917$. This notion was developed from the 50's with the work of \cite{Haight57} in 1957. 
The author considered a model of balking for $M/M/1$ queue in which there is a greatest queue length at which an arrival would not balk. 
\cite{AnckerI63} studied the $M/M/1$ queuing system with balking and reneging and performed its steady state analysis. The general model $GI/G/1 + GI$, has been thoroughly studied by \cite{Daley65}, \cite{Stanford79}, and  \cite{Baccelli84}. The model has recently studied by \cite{Moyal2010}, and \cite{Moyal2013}.
In all aforementioned papers, the server is always available to serve the customers in the system and the service process is never interrupted.

Vacation models are queueing systems in which servers may become unavailable for a period of time. In telecommunication systems, this period of absence may represent the server's working on some secondary job. In manufacturing systems, these unavailable periods may represent performing maintenance activities, or equipment breakdowns. These systems are received considerable attention in the literature, see for example, the book of \cite{TianZhang} and the survey of \cite{Doshi1986}.

Motivated by questions regarding the performance on real-time applications in packet switching network, this paper consider vacation models with impatient customers. Indeed, in a packet switching network, packets are routed from source to destination along a single path (arrival streams or packet flows) through intermediate nodes, called switch (queue). 
Packets (customers) from different sources are multiplexing resulting in delay transmission (waiting time). 
If we are concerned with performances of each packet flows, we may regarded the the other packet flows as secondary tasks. 
When the server is working on these secondary packet, it may be regarded as server on vacation. 
Moreover, a packet may loses  its value (renege) if it is not transmitted within a given interval. 
These both features can be modelled by a vacation queueing with customer impatience.

Queueing models with impatience and vacation seems to go back to \cite{vanderDuynSchouten78}. The author considers the model $M/G/1$ with finite capacity for the workload and server multiple vacations. He derives the joint stationary distribution of the workload, and the state of the server (available for service or on vacation). \cite{TakineHasegawa90} considered an $M/G/1$ queue with balking customers and deterministic deadline. They analyse this model when the deadline operates on the waiting time, and on sojourn time. The authors have obtained integral equations for the steady state probability distribution function of the waiting times and the sojourn times. They also expressed these equations in terms of steady state probability distribution function of the $M/G/1$ queue with vacation without deadline. More recently, \cite{Katayama2011} has investigated the $ M/G/1 $ queue with multiple and single vacation, sojourn time limits and balking behavior. Using the level crossing approach, explicit solutions for the stationary virtual waiting time distribution are derived under various assumptions on the service time distribution. 

Using the probability generating function approach,\cite{AltmanYechiali2006} and \cite{AltmanYechiali2008} have developed a comprehensive analysis, of queueing models such as $M/M/1+M$, $M/G/1+M$ $ G/M/1+M$, $M/M/c+M$ and $M/M/\infty + M$. 
They analyse these models when customers become impatient only when the server is on vacation. 
They obtain various performance measures, under both multiple and single vacation policies. 
\cite{Sudhesh2010} has considered the model $M/M/1$ with customer impatience and disastrous breakdown. The author obtains a close form expression for the time-dependent queue size distribution.
\cite{PerelYechiali2010} have investigated $M/M/c$ queues, for $c=1$, $1<c<\infty$ and $c=\infty$ where the impatience of customers id due to a slow service rate. The authors have derived the probability generating function of the queue-length and the mean queue size.
\cite{SelvarajuGoswami2013} considered an $M/M/1$ queueing system with impatience in which server serves at lower rate during a vacation period rather than completely stop serving. The authors derived some performance measures and the stochastic decomposition.
\cite{Sakuma2012} analysed the $M/M/c + D$ queue with multiple vacation exponentially distributed, where customers are impatient only when all servers are unavailable. He derives the stationary distribution of the system using the matrix-analytic method.
\cite{YueYueSaffer2014} considered a variant of the $M/M/1$ model introduce by \cite{AltmanYechiali2006}, they assumed that the server is allowed to take a maximum number $K$ of vacations if the system remains empty after the end of a vacation. 
They obtain closed form expressions for important performance measures.
\cite{AltmanYechiali2008} have investigate a $M/M/\infty$ queue system with single and multiple vacations in which customers are impatience only when servers are unavailable for service. 
The authors have derived the probability generating function of the number of customer in the system and some important performance measures.
\cite{Yechiali2007} has studied the $M/M/c$, for $1 \leq c \leq \infty$ queueing systems, where servers suffer disasters resulting in the loss all customers present in the system. Moreover, arriving customers during repair process are impatience. 
 \cite{EconomouKapodistria2010} extended the single server model by \cite{Yechiali2007} by assuming that customers perform synchronized abandonments.
Recently, \cite{YueYueZhao2015} extended the model $M/M/1+M$ under the single and multiple vacation policy studied by \cite{AltmanYechiali2006}. The authors assumed that customers are impatience regardless the state of the server. 
Furthermore, customers have a deadline depending on whether the server is on vacation or busy. 
Using the probability generating function approach, the authors derive some performance measures. 
In this paper, we analyse a Markovian queueing system where customer are also impatience regardless the state of the server, and the analysis is also based on the probability generating function.
Whereas we have completed this paper, soon after, \cite{YueYueZhao2015} had just been published. The main difference arises from the impatience assumption. Indeed, this paper we consider only one patience distribution, whereas Yue et al. consider two patience distributions depending on whether the server is 
on vacation or available for serving, resulting in different balance equations. 
Moreover, we consider also in this paper a multiserver queue with simple vacation.

The rest of paper is organized as follows. In Section~\ref{section1} and Section~\ref{section2} respectively, we analyse the model $M/M/1$ with exponentially distributed  patience time, under simple and multiple vacation policy. We derive the balance equations and we obtain and we solve the differential equations for PGFs of the steady-state probabilities. This enable us to calculate some performance measures such as the mean system sizes when the server is either on vacation or busy, the proportion of customers served, and the average sojourn time. In Section~\ref{section3} the model $M/M/c/V_{S} +M$.

\section{Analysis of \texorpdfstring{$M/M/1/V_S + M $}{Lg}}
\label{section1}

\subsection{Model description and notation}

We consider a queueing system in which customers arrive according to a Poisson process at rate $\lambda$. The service is provided by a single server of unlimited capacity. The successive service time of the server are assumed to be exponentially distributed with rate $\mu$. Whenever the system becomes empty, the server takes a vacation. If on return from vacation, the system is empty, the server waits for the next arrival, otherwise he begins service. The vacation times are exponentially distributed at rate $\gamma$. Assume furthermore that the customers are impatient. Whenever a customer arrives to the system, he becomes impatient regardless the states of the server. The impatient times are exponentially distributed at rate $\xi$. If a customer joins the queue, he stays until his service is completed.

The system is represented by a two-dimensional continuous Markov chain $\{ J(t), N(t) \}_{t \geq 0}$ with state space $S = \{ (j,n): j= 0,1, \; n=0,1,2,\ldots \}$, where $N(t)$ denotes the total number of customers present in the system and $J(t)$ indicates the state of the system at time $t$. If $J(t) = 0$, the server is on vacation, whereas if $J(t) = 1$, the server is serving customers or idle. The corresponding transition diagram is depicted in Figure~\ref{figureTransRate1}. Let us define the stationary probabilities for the Markov chain as

\begin{equation*}
p_{j,n} = \mathbb{P}(J(t) = j, N(t) = n), \quad j = 0,1, \quad n \geq 0.
\end{equation*}

\begin{figure}[h!]
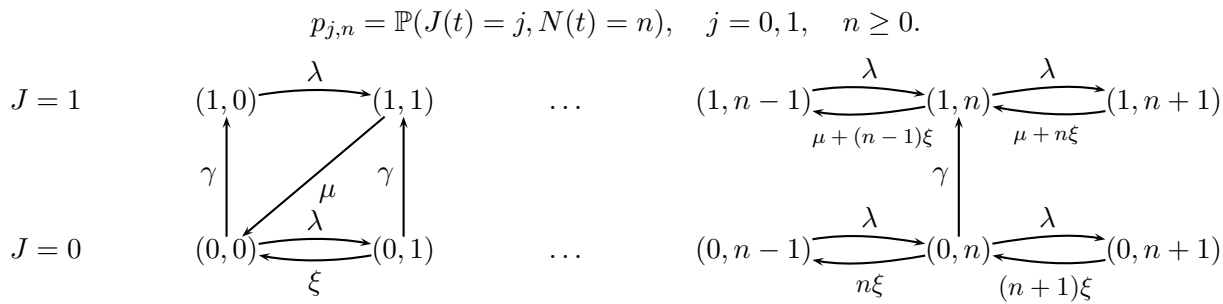

\begin{center}
\psmatrix[fillstyle=solid]
$J=1$ & $(1,0)$ & $(1,1)$ & $\ldots$ & $(1,n-1)$ & $(1,n)$ & $(1,n+1)$ \\
$J=0$ & $(0,0)$ & $(0,1)$ & $\ldots$ & $(0,n-1)$ & $(0,n)$ & $(0,n+1)$
\endpsmatrix
\psset{linecolor=black, arrows=->, labelsep=1mm,shortput=nab}

\ncarc[arcangle=10]{1,2}{1,3}^{$\lambda$}
\ncarc[arcangle=10]{1,5}{1,6}^{$\lambda$}
\ncarc[arcangle=10]{1,6}{1,7}^{$\lambda$}
\ncarc[arcangle=10]{1,6}{1,5}^{{\scriptsize $\mu + (n-1)\xi$}}
\ncarc[arcangle=10]{1,7}{1,6}^{{\scriptsize $\mu + n\xi$}}

\ncline{1,3}{2,2}^{$\mu$}
\ncarc[arcangle=10]{2,3}{2,2}^{$\xi$}
\ncarc[arcangle=10]{2,2}{2,3}^{$\lambda$}
\ncarc[arcangle=10]{2,5}{2,6}^{$\lambda$}
\ncarc[arcangle=10]{2,6}{2,7}^{$\lambda$}
\ncarc[arcangle=10]{2,6}{2,5}^{{\small $n\xi$}}
\ncarc[arcangle=10]{2,7}{2,6}^{{\small $(n+1)\xi$}}

\ncline{2,2}{1,2}^{$\gamma$}
\ncline{2,3}{1,3}^{$\gamma$}
\ncline{2,6}{1,6}^{$\gamma$}
\caption{Transition rate diagram of \texorpdfstring{$(J(t),N(t))$}{Lg} when the sever takes single vacation}
\label{figureTransRate1}
\end{center}
\end{figure}

\subsection{The equilibrium state distribution}

The set of balance equations is given as follows

\begin{equation}
\label{eq-balance00-VS}
 (\lambda + \gamma )p_{0,0} =  \xi p_{0,1} + \mu p_{1,1}.
\end{equation}

\begin{equation}
\label{eq-bamance0n-VS}
(\lambda + \gamma + n \xi)p_{0,n}= \lambda p_{0,n-1} + (n+1)\xi p_{0,n+1}.
\end{equation}

\begin{equation}
\label{eq-balance10-VS}
  \lambda p_{1,0} = \gamma p_{0,0}.
\end{equation}

\begin{equation}
\label{eq-balance1n-VS}
 (\lambda + \mu + (n-1)\xi)p_{1,n}= \lambda p_{1,n-1} + \gamma p_{0,n}+ (\mu + n\xi)p_{1,n+1}.
\end{equation}

It is convenient to introduce the probability generating function (PGFs) in order to solve Equations (\ref{eq-balance00-VS})-(\ref{eq-balance1n-VS}). Define the (partial) probability generating functions (PGFs) $P_{0}(z)$ and $P_{1}(z)$, for $0< z <1$ 

\begin{equation*}
P_{0}(z) = \sum_{n=0}^{\infty}z^n p_{0,n} \hspace{0.5cm} \mbox{and} \hspace{0.5cm} P_{1}(z)=\sum_{n=0}^{\infty}z^n p_{1,n},
\end{equation*}
with
\begin{equation}
\label{norming condition}
P_{0}(1) + P_{1}(1) = 1.
\end{equation}

Denote by $P'_{0}(z)$ and $P'_{1}(z)$ the respective derivatives of $P_{0}(z)$ and $P_{1}(z)$, for $0< z <1$

\begin{equation*}
P'_{0}(z) = \sum_{n=1}^{\infty}n z^{n-1} p_{0,n} \hspace{0.5cm} \mbox{and} \hspace{0.5cm} P'_{1}(z)= \sum_{n=1}^{\infty} n z^{n-1} p_{1,n}.
\end{equation*}

\n
Let us now discuss about several special cases:
\begin{enumerate}
\item $\xi = 0$. This case corresponds to a system where customers are not impatient. This model boils down to an $M/M/1/V_{s}$. This case was studied by \cite{ServiFinn2002}.

\item $\xi = 0$ only in Equation \eqref{eq-balance1n-VS}. This system corresponds to a system where customers are impatient only when the server is on vacation. This case was solved at first by \cite{AltmanYechiali2006} for $M/M/1$, $M/G/1$ and $M/M/c$ queues with multiple and single vacation.

\item $\gamma = 0$. This case represents a system without vacation. The state space is then one dimensional and the system boils down to an $M/M/1+M$ queue. A number of papers on queues with impatience phenomena have appeared. We may mention for example, for the $M/M/c$ queue \cite{Tijms1999} and the $M/M/1$ queue with priority \cite{BaraChung2001}.
\end{enumerate}

\n\\
Next, we will find the generating function $P_{0}$ and $P_{1}$. By multiplying \eqref{eq-balance00-VS} and \eqref{eq-bamance0n-VS} by $z^n$ and summing over $n$, we get, for $0 < z < 1$ 

\begin{equation}
\label{eq-diff-P0-VS}
\xi(1- z) P'_{0}(z) - \left[\lambda(1-z) + \gamma\right] P_{0}(z)= - \mu p_{1,1}.
 \end{equation}
 
\n
Equation \eqref{eq-diff-P0-VS} is similarly to Equation (5.5) in \cite{AltmanYechiali2006} and has been solved by the same author. We get in the same way a first order differential equation related to $P_{1}$, by using \eqref{eq-balance10-VS} and \eqref{eq-balance1n-VS}, we have

\begin{equation}
\label{eq-diff-P1-VS}
 (\lambda z -  \mu + \xi)(1-z) P_{1}(z) - \xi z(1-z)P'_{1}(z) = \gamma z P_{0}(z) + (\xi-\mu)(1-z) p_{1,0} - \mu z p_{1,1}.
\end{equation}

\n
We obtain a set a two first order linear differential equations involving the unknown probabilities $p_{1,0}$ and $p_{1,1}$. We shall see by solving (\ref{eq-diff-P0-VS}) that $p_{1,1}$ will be expressed in termes of $p_{0,0}$. Moreover, using Equation (\ref{eq-balance10-VS}), $p_{1,0}$ is expressed in termes of $p_{0,0}$. Thus, the PGFs $P_{0}$ and $P_{1}$ will be expressed in termes of $p_{0,0}$. Therefore, once $p_{0,0}$ will be calculated, $P_{0}$ and $P_{1}$ will completely determined.

\n\\
Dividing (\ref{eq-diff-P0-VS}) by $\xi(1-z)$ yields

\begin{equation}
\label{eq-diffbis-P0-VS}
P'_{0}(z) - \left[ \dfrac{\lambda}{\xi} + \dfrac{\gamma}{\xi(1-z)} \right] P_{0}(z) = -\dfrac{\mu}{\xi(1-z)}p_{1,1}.
\end{equation}

\n
To solve the above first order differential equation an integrating factor (see \cite{BoyceDiprima}) can be found as

\begin{equation*}
 e^{\int -\left(\frac{\lambda}{\xi} + \frac{\gamma}{\xi(1-z)}\right) dz} = e^{- \frac{\lambda}{\xi} z} (1-z)^{\frac{\gamma}{\xi}}.
\end{equation*}

\n
Multiplying both sides of (\ref{eq-diffbis-P0-VS}) by the integrating factor $e^{- \frac{\lambda}{\xi} z} (1-z)^{\frac{\gamma}{\xi}}$ we get

\begin{equation*}
\dfrac{d}{dz} \left[ e^{- \frac{\lambda}{\xi} z} (1-z)^{\frac{\gamma}{\xi}} P_{0}(z) \right] = - p_{1,1} \dfrac{\mu}{\xi} e^{- \frac{\lambda}{\xi}} (1-z)^{\frac{\gamma}{\xi}-1} .
\end{equation*}

\n
Integrating from $0$ to $z$ we have

\begin{equation*}
\label{solution equa diff0}
e^{ -\frac{\lambda}{\xi} z} (1-z)^{\frac{\gamma}{\xi}}P_{0}(z) - P_{0}(0) = - p_{1,1}\frac{\mu}{\xi} \int_{0}^{z}e^{ -\frac{\lambda}{\xi} s} (1-s)^{\frac{\gamma}{\xi} -1}ds,
\end{equation*}

\n
hence

\begin{equation}
\label{solution equa diff0-VS}
 P_{0}(z)  = e^{ \frac{\lambda}{\xi} z} (1-z)^{-\frac{\gamma}{\xi}}\left[ P_{0}(0) - p_{1,1}\frac{\mu}{\xi} \int_{0}^{z}e^{ -\frac{\lambda}{\xi} s} (1-s)^{\frac{\gamma}{\xi} -1}ds \right].
\end{equation}

\n
Since $P_{0}(1) = \sum_{n=0}^{\infty} p_{0,n} < \infty$ and $\lim_{z\rightarrow 1} (1-z)^{-\frac{\gamma}{\xi}} = \infty$, when taking the limite $z$ tending to $1$ in (\ref{solution equa diff0-VS}), and noting that $P_{0}(0) = p_{0,0}$ we expresse $p_{1,1}$ in termes of $p_{0,0}$

\begin{equation}
\label{p11 en fonction de p00}
p_{1,1} = \dfrac{\xi}{\mu A} p_{0,0}, 
\end{equation}
with

\begin{equation}
\label{A}
A(z) := \int_{0}^{z}e^{ -\frac{\lambda}{\xi} s} (1-s)^{\frac{\gamma}{\xi} -1}ds ,
\end{equation}
and
\begin{equation}
A := A(1) = \int_{0}^{1}e^{ -\frac{\lambda}{\xi} s} (1-s)^{\frac{\gamma}{\xi} -1}ds.
\end{equation}

\n
Note that $A(z)$ is well define if $\frac{\gamma}{\xi} - 1 >0$. Substituting (\ref{p11 en fonction de p00}) in (\ref{solution equa diff0-VS}) yields

\begin{equation}
\label{solution final equa diff0}
P_{0}(z) = p_{0,0} e^{ \frac{\lambda}{\xi} z} (1-z)^{-\frac{\gamma}{\xi}}\left[ 1 -  \dfrac{A(z)}{A} \right],
\end{equation}

\n
which is the as same Equation (2.12) in \cite{AltmanYechiali2006}, with $p_{1,1}$ given by (\ref{p11 en fonction de p00}). Equation (\ref{solution final equa diff0}) express $P_{0}(z)$ in terms of $p_{0,0}$. Thus, once $p_{0,0}$ is calculated $P_{0}(z)$ is completely determined. The next theorem focuses on the calculation of $p_{0,0}$. \vspace{0.3cm}

Next, we will find the generating function $P_{1}$. Dividing (\ref{eq-diff-P1-VS}) by $- \xi z(1-z)$ leads to

%

\begin{equation*}
P'_{1}(z) - \left[ \frac{\lambda}{\xi} - \left(\frac{\mu}{\xi} - 1\right)\frac{1}{z} \right]P_{1}(z) = -\dfrac{\gamma}{\xi(1-z)}P_{0}(z) -  \dfrac{1}{z}\left(1-\dfrac{\mu}{\xi}\right)p_{1,0} + \dfrac{\mu}{\xi(1-z)}p_{1,1}.
\end{equation*}

\n
Using Equation~(\ref{eq-balance10-VS}), as well as (\ref{p11 en fonction de p00}), the above equation is written as

\begin{equation}
\label{equa diff 1}
P'_{1}(z) - \left[ \frac{\lambda}{\xi} - \left(\frac{\mu}{\xi} - 1\right)\frac{1}{z} \right]P_{1}(z) = -\dfrac{\gamma}{\xi(1-z)}P_{0}(z) + \left[  \dfrac{(\mu - \xi)\gamma}{\xi \lambda z} + \dfrac{1}{(1-z)A}\right]p_{0,0}.
\end{equation}

\n
Multiplying both sides of (\ref{equa diff 1}) by $e^{-\frac{\lambda}{\xi}z} z^{\frac{\mu}{\xi} - 1}$ (with $\frac{\mu}{\xi} - 1 >0$) leads to

\begin{equation}
\frac{d}{dz} \left[ e^{-\frac{\lambda}{\xi}z} z^{\frac{\mu}{\xi} - 1}P_{1}(z) \right] = e^{-\frac{\lambda}{\xi}z} z^{\frac{\mu}{\xi} - 1}\left[ -\frac{\gamma}{\xi(1-z)}P_{0}(z) + p_{0,0}\left( \dfrac{(\mu - \xi)\gamma}{\xi \lambda z} + \frac{1}{A(1)(1-z)}  \right) \right],
\end{equation}

\n 
Integrating from $0$ to $z$, the solution to the differential equation is given by

\begin{equation*}
P_{1}(z) = e^{\frac{\lambda}{\xi}z} z^{-\left(\frac{\mu}{\xi} - 1\right)} \int_{0}^{z} e^{-\frac{\lambda}{\xi}z} z^{\frac{\mu}{\xi} - 1} \left[ -\frac{\gamma}{\xi(1-s)}P_{0}(s) + p_{0,0}\left\lbrace  \frac{(\mu-\xi )\gamma}{\lambda \xi s} + \frac{1}{A(1-s)} \right\rbrace \right]ds,
\end{equation*}
where $P_{0}(.)$ is given by (\ref{solution final equa diff0}). The above equation  is valid if and only if $\frac{\mu}{\xi} - 1$ which leads to $\mu > \xi$. Substituting (\ref{solution final equa diff0}) in the above equation, we have

\begin{equation}
\label{P1}
  P_{1}(z) = p_{0,0}e^{\frac{\lambda}{\xi}z} z^{-\left(\frac{\mu}{\xi} - 1\right)} \left[ -\frac{\gamma}{\xi} B(z) + \frac{(\mu - \xi)\gamma}{\xi \lambda} C(z) + \frac{ D(z)}{A}\right],
\end{equation}
where

\begin{equation}
\label{B}
B(z) = \int_{0}^{z} s^{\frac{\mu}{\xi} - 1 }(1-s)^{-\left(\frac{\gamma}{\xi}+1\right)} \left(1-\frac{A(s)}{A}\right) ds,
\end{equation}

\begin{equation}
\label{C}
C(z) = \int_{0}^{z} e^{-\frac{\lambda}{\xi}s} s^{\frac{\mu}{\xi} -2} ds,
\end{equation}

\n
and
\begin{equation}
\label{D}
D(z) = \int_{0}^{z} e^{-\frac{\lambda}{\xi}s} s^{\frac{\mu}{\xi} -1}(1-s)^{-1} ds.
\end{equation}

\n
All the previous is summarized in the following Proposition.

\begin{prop}
\label{prop1}
If $\rho := \frac{\lambda}{\mu} < 1$ and $ \frac{\xi}{\mu} < 1$ the partial generating function can be expressed in terms of $p_{0,0}$ as

\begin{equation}
\label{P0-VS}
P_{0}(z)  = p_{0,0} e^{\frac{\lambda}{\xi}z} (1-z)^{-\frac{\gamma}{\xi}} \left[ 1 - \dfrac{A(z)}{A} \right],
\end{equation}
and
\begin{equation}
\label{P1-VS}
  P_{1}(z) = p_{0,0}e^{\frac{\lambda}{\xi}z} z^{-(\frac{\mu}{\xi} - 1)} \left[ -\frac{\gamma}{\xi} B(z) + \frac{(\mu-\xi)\gamma}{\xi \lambda} C(z) + \frac{D(z)}{A} \right],
\end{equation}

\n
where, $A(.)$, $B(.)$, $C(.)$ and $D(.)$ are given respectively by (\ref{A}), (\ref{B}), (\ref{C}) and (\ref{D}).
\end{prop}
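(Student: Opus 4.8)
The plan is to reconstruct both formulas from the two first-order linear ODEs \eqref{eq-diff-P0-VS} and \eqref{eq-diff-P1-VS} that the balance equations \eqref{eq-balance00-VS}--\eqref{eq-balance1n-VS} produce after multiplying by $z^n$ and summing. The whole argument reduces to a pair of integrating-factor computations, with a single analyticity constraint (finiteness of each PGF at $z=1$) used to pin down the otherwise free constant of integration; everything is then expressed through $p_{0,0}$ by means of \eqref{eq-balance10-VS}.

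First, for $P_0$ I would put \eqref{eq-diff-P0-VS} into standard form by dividing by $\xi(1-z)$, read off the integrating factor $e^{-\frac{\lambda}{\xi}z}(1-z)^{\frac{\gamma}{\xi}}$, and integrate from $0$ to $z$, which yields \eqref{solution equa diff0-VS} with the integration constant equal to $P_0(0)=p_{0,0}$ and an unknown $p_{1,1}$. The key step is the boundedness argument: since $P_0(1)=\sum_{n\ge 0}p_{0,n}<\infty$ while $(1-z)^{-\frac{\gamma}{\xi}}\to\infty$ as $z\to 1$, the bracket in \eqref{solution equa diff0-VS} must vanish at $z=1$, forcing $p_{1,1}=\frac{\xi}{\mu A}p_{0,0}$ as in \eqref{p11 en fonction de p00}. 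Substituting this back collapses the bracket to $1-A(z)/A$ and gives exactly \eqref{P0-VS}.

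Next, for $P_1$ I would divide \eqref{eq-diff-P1-VS} by $-\xi z(1-z)$, eliminate $p_{1,0}$ and $p_{1,1}$ using \eqref{eq-balance10-VS} and \eqref{p11 en fonction de p00}, and multiply through by the integrating factor $e^{-\frac{\lambda}{\xi}z}z^{\frac{\mu}{\xi}-1}$. Integrating from $0$ to $z$ and inserting the closed form \eqref{P0-VS} for $P_0$, the right-hand side separates into three integrals, which I would identify term by term with $-\frac{\gamma}{\xi}B(z)$, $\frac{(\mu-\xi)\gamma}{\xi\lambda}C(z)$ and $\frac{D(z)}{A}$ from \eqref{B}, \eqref{C}, \eqref{D}, reproducing \eqref{P1-VS}.

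The part needing genuine care is the role of the hypotheses and the convergence of the integrals. The condition $\xi/\mu<1$, i.e.\ $\mu>\xi$, is exactly what makes the exponent $\frac{\mu}{\xi}-2$ in $C(z)$ larger than $-1$, so that $C(z)$ converges at $s=0$ and the integrating-factor step for $P_1$ is legitimate; I would also check that $\gamma/\xi>0$ guarantees convergence of $A$ at $s=1$ and that each integrand is integrable on $(0,z)$ for $0<z<1$. By contrast, the stability condition $\rho<1$ does not seem to enter the derivation of the two PGFs directly---impatience alone keeps the chain positive recurrent---so I would expect it to be needed only later, for the evaluation of $p_{0,0}$ through the normalization $P_0(1)+P_1(1)=1$ and for the finiteness of the performance measures. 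The main obstacle is thus bookkeeping: carefully tracking the constant $p_{0,0}$ through both substitutions and verifying that the three-way split of the $P_1$ integral matches \eqref{B}--\eqref{D} precisely.
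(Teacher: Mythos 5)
Your proposal is correct and follows essentially the same route as the paper: the same integrating factors $e^{-\frac{\lambda}{\xi}z}(1-z)^{\frac{\gamma}{\xi}}$ and $e^{-\frac{\lambda}{\xi}z}z^{\frac{\mu}{\xi}-1}$, the same boundedness argument at $z=1$ to force $p_{1,1}=\frac{\xi}{\mu A}p_{0,0}$, and the same elimination of $p_{1,0}$ via \eqref{eq-balance10-VS} before the three-way identification with $B$, $C$, $D$. Your side remarks on the hypotheses are also sound---indeed sharper than the paper, which overstates the requirement for $A(z)$ as $\frac{\gamma}{\xi}-1>0$ when $\frac{\gamma}{\xi}>0$ suffices, and which, like you, only ever uses $\mu>\xi$ (not $\rho<1$) in deriving the two PGFs.
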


Equations (\ref{P0-VS}) and (\ref{P1-VS}) are expressed in terms of $p_{0,0}$, the probability that there is no customer in the system when the server is on vacation. In order to completely determined $P_{0}(.)$ and $P_{1}(.)$ we have to calculated $p_{0,0}$. The following Proposition gives this value.

\begin{prop}
\label{prop2}
If $\rho := \frac{\lambda}{\mu} < 1$ and $ \frac{\xi}{\mu} < 1$ the probability $p_{0,0}$ is given by

\begin{equation}
\label{p00}
p_{0,0} = \left[ \dfrac{\xi}{\gamma A} + e^{\frac{\lambda}{\xi}} \left\lbrace -\frac{\gamma}{\xi} B(1) + \frac{( \mu-\xi)\gamma}{\xi \lambda} C(1) + \frac{D(1)}{A} \right\rbrace \right]^{-1}.
\end{equation}
\end{prop}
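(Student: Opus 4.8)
The plan is to pin down $p_{0,0}$ from the one piece of information not yet used, namely the normalizing condition \eqref{norming condition}, $P_0(1) + P_1(1) = 1$. Since Proposition~\ref{prop1} already writes both $P_0(z)$ and $P_1(z)$ as $p_{0,0}$ times an explicit function of $z$, evaluating each at $z=1$ produces an equation of the form $p_{0,0}\,[\,\cdots\,]=1$, and \eqref{p00} follows by taking reciprocals. The entire difficulty is therefore concentrated in computing the two limits $P_0(1):=\lim_{z\to 1}P_0(z)$ and $P_1(1):=\lim_{z\to 1}P_1(z)$, each of which is a priori indeterminate.

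First I would treat $P_0(1)$. Writing $1-A(z)/A=(A-A(z))/A$ with $A-A(z)=\int_z^1 e^{-\lambda s/\xi}(1-s)^{\gamma/\xi-1}\,ds\to 0$ while $(1-z)^{-\gamma/\xi}\to\infty$, the product in \eqref{P0-VS} is of type $\infty\cdot 0$. I would recast it as the ratio
\[
P_0(z) = \frac{p_{0,0}\,e^{\lambda z/\xi}}{A}\cdot\frac{\int_z^1 e^{-\lambda s/\xi}(1-s)^{\gamma/\xi-1}\,ds}{(1-z)^{\gamma/\xi}},
\]
a $0/0$ form, and apply l'H\^opital's rule. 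Differentiating the numerator (by the fundamental theorem of calculus) and the denominator yields $-e^{-\lambda z/\xi}(1-z)^{\gamma/\xi-1}$ over $-\tfrac{\gamma}{\xi}(1-z)^{\gamma/\xi-1}$, whose limit as $z\to 1$ is $\tfrac{\xi}{\gamma}e^{-\lambda/\xi}$. Multiplying by the prefactor, the factors $e^{\pm\lambda/\xi}$ cancel and I obtain the clean value $P_0(1)=\tfrac{\xi}{\gamma A}\,p_{0,0}$, which is precisely the first term inside the bracket of \eqref{p00}.

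Next I would evaluate $P_1(1)$ from \eqref{P1-VS}. Here $e^{\lambda z/\xi}z^{-(\mu/\xi-1)}\to e^{\lambda/\xi}$, so the task reduces to letting $z\to 1$ in the bracket $-\tfrac{\gamma}{\xi}B(z)+\tfrac{(\mu-\xi)\gamma}{\xi\lambda}C(z)+\tfrac{D(z)}{A}$. The subtle point, and the main obstacle, is that $B(1)$ and $D(1)$ are \emph{not} separately finite: near $s=1$ one has $1-A(s)/A\sim\tfrac{\xi}{\gamma A}e^{-\lambda/\xi}(1-s)^{\gamma/\xi}$ (the same estimate generated by the l'H\^opital step above), so the integrands of $B$ and $D$ each behave like a constant times $(1-s)^{-1}$, producing logarithmic divergences. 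I would show these cancel exactly in the combination $-\tfrac{\gamma}{\xi}B(z)+\tfrac{1}{A}D(z)$: the leading singular parts are $-\tfrac{\gamma}{\xi}\cdot\tfrac{\xi}{\gamma A}e^{-\lambda/\xi}(1-s)^{-1}$ and $+\tfrac{1}{A}e^{-\lambda/\xi}(1-s)^{-1}$, which sum to zero, so the limit defining $P_1(1)$ exists. I would also record that $C(1)$ converges at the lower endpoint precisely because the hypothesis $\xi/\mu<1$ makes the exponent $\mu/\xi-2>-1$. With $B(1),D(1)$ read as this convergent combination, $P_1(1)=p_{0,0}\,e^{\lambda/\xi}\bigl[-\tfrac{\gamma}{\xi}B(1)+\tfrac{(\mu-\xi)\gamma}{\xi\lambda}C(1)+\tfrac{D(1)}{A}\bigr]$.

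Finally I would substitute both limits into $P_0(1)+P_1(1)=1$, factor out $p_{0,0}$, and invert to recover \eqref{p00}. The genuinely delicate parts are the l'H\^opital evaluation of the $\infty\cdot 0$ form for $P_0$ and, above all, the justification that the individually divergent $B(1)$ and $D(1)$ merge into a finite quantity; once the singularities are shown to cancel, the remaining algebra is immediate.
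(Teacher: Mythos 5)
Your proposal is correct and follows the same route as the paper's proof: evaluate $P_{0}(1)$ and $P_{1}(1)$ from Proposition~\ref{prop1} and solve the norming condition \eqref{norming condition} for $p_{0,0}$; your l'H\^opital computation of $P_{0}(1)=\tfrac{\xi}{\gamma A}p_{0,0}$ is exactly the paper's. The one place you genuinely diverge is the evaluation of $P_{1}(1)$: the paper simply sets $z=1$ in \eqref{P1-VS}, implicitly treating $B(1)$ and $D(1)$ as finite numbers, whereas you observe---correctly---that both integrals diverge logarithmically at $s=1$. Indeed, since $1-A(s)/A\sim\tfrac{\xi}{\gamma A}e^{-\lambda/\xi}(1-s)^{\gamma/\xi}$ as $s\to 1$, the integrand of $B$ behaves like $\tfrac{\xi}{\gamma A}e^{-\lambda/\xi}(1-s)^{-1}$ and that of $D$ like $e^{-\lambda/\xi}(1-s)^{-1}$, and these singular parts cancel exactly in the combination $-\tfrac{\gamma}{\xi}B(z)+\tfrac{1}{A}D(z)$. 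This is not a cosmetic refinement: without it, the bracket in \eqref{p00} is a priori a sum of two infinite quantities, and your cancellation argument is what makes the formula well-defined (as the limit of the combination). Your remark that the hypothesis $\xi/\mu<1$ is precisely what makes $C(1)$ converge at $s=0$ likewise makes explicit an assumption the paper uses silently. So your write-up proves the same statement by the same normalization strategy, but closes a rigor gap that the paper's own proof leaves open; the only caveat is that the proposition's formula should then be read with $-\tfrac{\gamma}{\xi}B(1)+\tfrac{D(1)}{A}$ interpreted as that finite limit rather than as a difference of two convergent integrals.
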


\begin{proof}
The idea consists in expressing $P_{0}(1)$ and $P_{1}(1)$, which are respectively the probability that the server is on vacation and the probability that the server is serving customers or the server is ildle, in termes of $p_{0,0}$. In order to obtain $p_{0,0}$ we will use the norming condition (\ref{norming condition}). 

\n\\
From Equation (\ref{P0-VS}) by using the L'Hopital's rule  we get
 
\begin{equation*}
\label{P_0(1)_VS}
 P_{0}(1) = \lim_{\substack{z \rightarrow 1}}\dfrac{e^{\frac{\lambda}{\xi}} z p_{0,0} \left\lbrace \frac{\lambda}{\xi}\left(1-A^{-1}A(z) \right) - A^{-1}A'(z)\right\rbrace }{-\frac{\gamma}{\xi}(1-z)^{\frac{\gamma}{\xi}}-1},
\end{equation*} 
where $A'(z) = e^{-\frac{\lambda}{\xi}z} (1-z)^{\frac{\gamma}{\xi}-1}$ which yields 

\begin{equation}
\label{P_0(1)}
P_{0}(1) = p_{0,0} \frac{\xi}{\gamma A}.
\end{equation}

\n
Using (\ref{P1-VS}) for $z=1$ yields

\begin{equation*}
P_{1}(1) = 1-P_{0}(1) = p_{0,0}e^{\frac{\lambda}{\xi}} \left[  -\frac{\gamma}{\xi} B(1) - \frac{(\xi - \mu)\gamma}{\xi \lambda} C(1) + A^{-1}D(1) \right].
\end{equation*}

\n
Using the norming condition $P_{0}(1) + P_{1}(1) = 1$ we get


\begin{equation*}
p_{0,0} = \left[ \dfrac{\xi}{\gamma A} + e^{\frac{\lambda}{\xi}} \left\lbrace -\frac{\gamma}{\xi} B(1)- \frac{(\xi - \mu)\gamma}{\xi \lambda} C(1) + \frac{D(1)}{A}  \right\rbrace \right]^{-1}.
\end{equation*}

\end{proof}

\subsection{Performance measures}
One of the main performance measures is the sojourn times. The system performance measures of the model are given below.

\subsubsection{The probability that the server is on vacation.}

\begin{align*}
P_{vac} = \mathbb{P}(J=0) &= \sum_{n=0}^{\infty} \mathbb{P}(J=0,N=n)\\
                &= \sum_{n=0}^{\infty} p_{0,n} \\
                &= P_{0}(1) = p_{0,0} \frac{\xi}{\gamma A},
\end{align*}
with $p_{0,0}$ given by (\ref{p00}).

\subsubsection{The probability that the server is idle.}

\begin{equation*}
P_{idle} = \mathbb{P}(J=1,N=0) = p_{1,0} = \frac{\gamma}{\lambda}p_{0,0},
\end{equation*}
according (\ref{eq-balance10-VS}).

\subsubsection{The probability that the server is serving customers.}

\begin{align*}
P_{ser} &= \sum_{n=1}^{\infty} \mathbb{P}(J=1, N=n)\\
         &= \sum_{n=1}^{\infty} p_{1,n}\\
         &= 1 - P_{idle} - P_{vac}\\
         &= 1 - p_{0,0}\left[ \frac{\gamma}{\lambda} - \frac{\xi}{\gamma A(1)} \right].
\end{align*}

For $j=0,1$, let $N_{j}$ be the system size when the server is in state $j$. Then $\mathbb{E}(N_{j})$ is the mean system size when the server is in state $j$.

\subsubsection{The mean number of customers when the system is on vacation}

\begin{align*}
\mathbb{E}(N_{0}) = \sum_{n=0}^{\infty} n \mathbb{P}(J=0,N=n) = \sum_{n=1}^{\infty} n p_{0,n} = P^{'}_{0}(1)
\end{align*}

\n
From (\ref{eq-diffbis-P0-VS}) and by using the Hospital's rule we get

\begin{align*}
P'_{0}(1) &= \lim_{z \rightarrow 1} \dfrac{\left[ \lambda(1-z) +\gamma \right]P_{0(z)} - \mu p_{11}}{\xi (1-z) }\\
             &= \lim_{z \rightarrow 1} \dfrac{-\lambda P_{0}(z)+ \left[ \lambda(1-z) +\gamma \right]P^{'}_{0}(z)}{-\xi}\\
             &= \dfrac{-\lambda P_{0}(1)+\gamma P^{'}_{0}(1)}{-\xi}.
             \end{align*}

\n
Thus we get,

\begin{equation}
P'_{0}(1) = \frac{\lambda}{\gamma + \xi}P_{0}(1) = \frac{\lambda}{\gamma + \xi} \frac{\xi}{\gamma A}p_{0,0} 
\end{equation}

We now derive the mean number of customers during non-vacation period. From (\ref{eq-diff-P1-VS}) we have

\begin{equation*}
\mathbb{E}(N_1)= P'_{1}(z) = \dfrac{(\lambda z - \mu + \xi)(1-z) - \gamma z P_{0}(z) - (\xi - \mu)(1-z)p_{1,0} + \mu z p_{1,1}}{\xi z (1-z)}.
\end{equation*}

\n Using the Hopital's rule we get

\begin{align*}
P'_{1}(1) &= \lim_{z\rightarrow 1} \dfrac{ \lambda(1-z) - (\lambda z - \mu + \xi) - \gamma P_{0}(z) - \gamma z P'_{0}(z)+(\xi - \mu) p_{1,0} + \mu p_{1,1}}{\xi(1-2z)}\\
             &= \dfrac{ - (\lambda  - \mu + \xi) - \gamma P_{0}(1) - \gamma P'_{0}(1)+(\xi - \mu) p_{1,0} + \mu p_{1,1}}{-\xi}\\
             &= \left[ -(\lambda - \mu + \xi) - \frac{\lambda}{\lambda + \xi}\frac{\xi}{A}p_{0,0} + \frac{\xi \gamma}{\lambda}p_{0,0} - \frac{\mu \gamma}{\lambda}p_{0,0} \right]\times\frac{1}{- \xi}.
\end{align*}

\n
Thus we have

\begin{equation}
\mathbb{E}(N_1) = \frac{\lambda - \mu + \xi}{\xi} + \left[ \frac{\lambda}{(\lambda + \xi)A}-\frac{\gamma}{\lambda} + \frac{\mu \gamma}{\lambda \xi} \right]p_{0,0},
\end{equation}
where $p_{0,0}$ is given by (\ref{p00}).

\subsubsection{Sojourn times}
\label{section-perf-sojour time}

Let $S$ be the unconditional total sojourn time of an arbitrary customer in the system, regardless of whether he completes service or not. Denote $S_{j,n} := \mathbb{E}(S\mid (X_{0}=(j,n+1)))$ the conditional sojourn time of a tagged customer in the system who does not abandon the system, given that the state upon his arrival is $(j,n)$, because the tagged customer is included in the system. We use first-step analysis method which consists by considering what the Markov chain dos at time $1$, i.e. after it takes on step from its current position. The total sojourn time when the tagged customer upon his arrival is $(1,0)$, i.e. the server is idle, is given by

\begin{equation}
\mathbb{E}(S_{1,0}) = \frac{1}{\mu}.
\end{equation}

\n
Let us calculate $\mathbb{E}(S_{1,n})$.
By conditioning on whether the next transition is a departure (either because of completion service or an impatient customer) or an arrival, we obtain

\begin{align*}
\begin{split}
\mathbb{E}(S_{1,n}) &= \mathbb{E}\left(S\mid X_{0}=(1,n+1)\right) \\
    &= \frac{\mu + n\xi}{\beta_n}\mathbb{E}\left( S \mid X_{0}=(1,n+1), X_{1}=(1,n)\right) + \frac{\lambda}{\beta_{n}}\mathbb{E}\left( S \mid X_{0}=(1,n+1), X_{1}=(1,n+2)\right)  \\
    &=  \frac{\mu + n\xi}{\beta_n} \left( \frac{1}{\beta_{n}} + \mathbb{E}(S \mid X_{0}=(1,n)) \right) + \frac{\lambda}{\beta_{n}}\left( \frac{1}{\beta_{n}} + \mathbb{E}(S \mid X_{0}=(1,n+1)) \right)\\
    &= \frac{1}{\beta_n} + \frac{\mu + n\xi}{\beta_n} \mathbb{E}(S \mid X_{0}=(1,n)) + \frac{\lambda}{\beta_{n}}\mathbb{E}(S_{1,n}).
\end{split}
\end{align*}
where $\beta_{n} = \lambda + \mu + n\xi$. Since 

\begin{align*}
\frac{\mu + n\xi}{\beta_n} \mathbb{E}(S \mid X_{0}=(1,n)) &= \frac{\mu}{\beta_{n}}\mathbb{E}(S_{1,n-1}) + \frac{n\xi}{\beta_{n}}\mathbb{E}(S_{1,n-1})\\
         &= \frac{\mu}{\beta_{n}}\mathbb{E}(S_{1,n-1}) + \frac{n\xi}{\beta_{n}} \left( \frac{1}{n}\times 0 + \frac{n-1}{n}\mathbb{E}(S_{1,n-1})  \right),
\end{align*}
we get

\begin{equation*}
\mathbb{E}(S_{1,n}) = \frac{1}{\beta_n}+ \frac{\mu + (n-1)\xi}{\beta_n} \mathbb{E}(S_{1,n-1}) + \frac{\lambda}{\beta_{n}}\mathbb{E}(S_{1,n}),
\end{equation*}
thus

\begin{equation}
\label{eq rec E(S_{1,n})}
\mathbb{E}(S_{1,n}) = \frac{1}{\mu + n\xi} + \frac{\mu + (n-1)\xi}{\beta_n}\mathbb{E}(S_{1,n}).
\end{equation}

\n
Iterating Equation~\eqref{eq rec E(S_{1,n})} we get for $n \geq 0$

\begin{equation}
\label{E(S_{1,n})}
\mathbb{E}(S_{1,n})= \frac{n+1}{\mu + n\xi}.
\end{equation}

\begin{rem}
If $\xi=0$, Equation~\eqref{E(S_{1,n})} becomes $\mathbb{E}(S_{1,n})= \frac{n+1}{\mu}$, which is the total sojourn time of the tagged customer when customers are not subject to impatience.
\end{rem}

The calculation of $\mathbb{E}(S_{0,0})$ and $\mathbb{E}(S_{0,n})$ is also based on the first step analysis. When the tagged customer finds the system upon arrival in the state $(0,0)$, the next transition is an arrival, a vacation completion or a the departure of the tagged customer, thus by conditioning on these events we obtain

\begin{align*}
\begin{split}
   \mathbb{E}(S_{0,0}) &= \frac{\gamma}{\lambda + \mu + \xi}\mathbb{E}(S\mid X_{0}=(0,1), X_{1}=(1,1)) + \frac{\lambda}{\lambda + \mu + \xi}\mathbb{E}(S\mid X_{0}=(0,1), X_{1}=(0,2))  \\
   & \qquad+ \frac{\xi}{\lambda + \mu + \xi}\mathbb{E}(S\mid X_{0}=(0,1), X_{1}=(0,0)),
\end{split}
\end{align*}
thus,

\begin{equation}
\label{E(S_{0,0})}
\mathbb{E}(S_{0,0}) = \frac{\gamma}{\gamma + \xi}\left( 1 + \frac{\gamma}{\mu} \right).
\end{equation}

\n
and for $n \geq 1$

\begin{equation*}
\mathbb{E}(S_{0,n}) = \frac{1}{\gamma + (n+1)\xi} + \frac{\gamma}{\gamma + (n+1)\xi}\mathbb{E}(S_{1,n}) + \frac{n\xi}{\gamma + (n+1)\xi} \mathbb{E}(S_{0,n-1}).
\end{equation*}

\n
Subsituting \eqref{E(S_{1,n})} in the above Equation yields

\begin{align}
\label{E(S_{0,n})}
\sum_{k=0}^{n} \frac{\xi^{n-k}}{\prod_{j=k+1}^{n+1}(\gamma + j\xi)} \frac{n!}{k!}\left( \frac{(k+1)\gamma}{\mu + n\xi} +1\right).
\end{align}

\section{Analysis of \texorpdfstring{$M/M/1/V_M + M $}{Lg}}
\label{section2}

\subsection{Balance equations}

We consider now the $M/M/1$ queue where the server takes multiple vacations at the end of a busy period. If the server returns from a vacation to an empty system he takes another vacation and so on. Otherwise, if he begins service until there is no customers. Customer as before are impatient. The system state, as in Section~\ref{section1} can be represented by a continuous Markov chain $\{ J(t),N(t) \}_{t \geq 0}$. The corresponding transition diagram is the same as Figure\ref{figureTransRate1} but without the state $(1,0)$ since the server is never idle (see Figure~\ref{figureTransRate2}). 

\begin{figure}[h!]
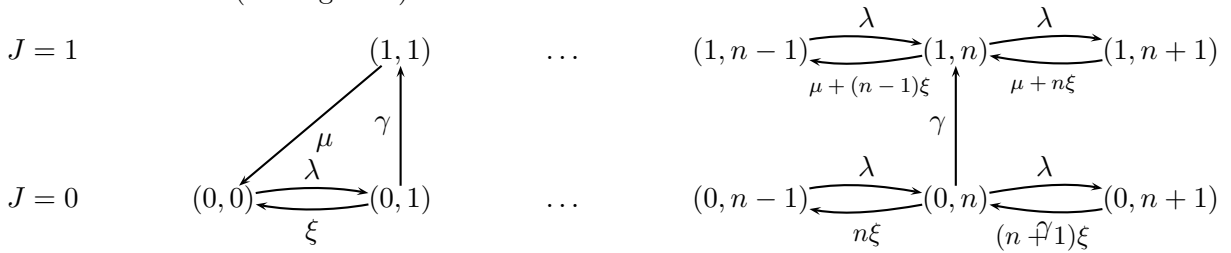

\begin{center}
\psmatrix[fillstyle=solid]
$J=1$ &         & $(1,1)$ & $\ldots$ & $(1,n-1)$ & $(1,n)$ & $(1,n+1)$ \\
$J=0$ & $(0,0)$ & $(0,1)$ & $\ldots$ & $(0,n-1)$ & $(0,n)$ & $(0,n+1)$
\endpsmatrix
\psset{linecolor=black, arrows=->, labelsep=1mm,shortput=nab}

\ncarc[arcangle=10]{1,2}{1,3}^{$\lambda$}
\ncarc[arcangle=10]{1,5}{1,6}^{$\lambda$}
\ncarc[arcangle=10]{1,6}{1,7}^{$\lambda$}
\ncarc[arcangle=10]{1,6}{1,5}^{{\scriptsize $\mu + (n-1)\xi$}}
\ncarc[arcangle=10]{1,7}{1,6}^{{\scriptsize $\mu + n\xi$}}

\ncline{1,3}{2,2}^{$\mu$}
\ncarc[arcangle=10]{2,3}{2,2}^{$\xi$}
\ncarc[arcangle=10]{2,2}{2,3}^{$\lambda$}
\ncarc[arcangle=10]{2,5}{2,6}^{$\lambda$}
\ncarc[arcangle=10]{2,6}{2,7}^{$\lambda$}
\ncarc[arcangle=10]{2,6}{2,5}^{{\small $n\xi$}}
\ncarc[arcangle=10]{2,7}{2,6}^{{\small $(n+1)\xi$}}

\ncline{2,2}{1,2}^{$\gamma$}
\ncline{2,3}{1,3}^{$\gamma$}
\ncline{2,6}{1,6}^{$\gamma$}
\caption{Transition rate diagram of \texorpdfstring{$(J(t),N(t))$}{Lg} when the server takes multiple vacations}
\label{figureTransRate2}
\end{center}
\end{figure}

Let us define the stationary probabilities $\{ p_{j,n}, j=0,1 \; n=0,1,\ldots \}$ and the partial generating functions, $P_{0}(z)$ and $P_{1}(z)$, for $0<z<1$

\begin{equation*}
P_{0}(z) = \sum_{n=0}^{\infty}z^n p_{0,n} \hspace{1cm} \mbox{and} \;\; P_{1}(z)=\sum_{n=1}^{\infty}z^n p_{1,n},
\end{equation*}
with
\begin{equation*}
P_{0}(1) + P_{1}(1) = 1.
\end{equation*}

The balance equations of the model are given as follows

\begin{equation}
\label{eq-balance00-VM}
 \lambda p_{0,0} =  \xi p_{0,1} + \mu p_{1,1}.
\end{equation}

\begin{equation}
\label{eq-balance0n-VM}
 (\lambda + \gamma + n \xi)p_{0,n}= \lambda p_{0,n-1} + (n+1)\xi p_{0,n+1},\qquad  n\geq 1.
\end{equation}

\begin{equation}
\label{eq-balance11-VM}
   (\lambda + \mu) p_{1,1} = (\mu + \xi) p_{1,2} + \gamma p_{0,1}.
\end{equation}

\begin{equation}
\label{eq-balance1n-VM}
 (\lambda + \mu + (n-1)\xi)p_{1,n}= \lambda p_{1,n-1} + \gamma p_{0,n}+ (\mu + n\xi)p_{1,n+1},\qquad  n\geq 2 .
\end{equation}

\begin{rem}
Equations~\eqref{eq-balance00-VM} and \eqref{eq-balance0n-VM} are the same as Equation (2.1) in \cite{AltmanYechiali2006}.
\end{rem}

Multiplying each Equations \eqref{eq-balance00-VM}-\eqref{eq-balance1n-VM} by $z^{n}$ and adding over $n$ yields the following linear first order differential equations

\begin{equation}
\label{equa-diff0-VM}
 \xi (1-z)P'_{0}(z) - \left[ \lambda(1-z) + \gamma \right]P_{0}(z) =  -(\gamma p_{0,0}+\mu p_{1,1}).
\end{equation}

\begin{equation}
\label{equa-diff1-VM}
\left[ (\lambda z - \mu)(1-z) + \xi(1-z) \right]P_{1}(z) - \xi z(1-z)P'_{1}(z) = \gamma zP_{0}(z) - (\gamma p_{0,0} + \mu p_{1,1} ) + \xi p_{1,1}.
\end{equation}

\begin{rem}
When Comparing Equation~(\ref{equa-diff0-VM}) with (\ref{eq-diff-P0-VS}) for the single vacation case, we have add the constant $\gamma p_{0,0}$. Note that this equation is the same as Equation (2.3) \cite{AltmanYechiali2006}.
\end{rem}

\subsection{Solution of differential equations}

The differential equation has been solved in \cite{AltmanYechiali2006}. The solution is obtained using the same method as in previous section. We multiply both sides of Equation~\eqref{equa-diff0-VM} by the integrating factor $e^{-\lambda/\xi z}(1-z)^{\gamma/\xi}$, which yields

\begin{equation}
\label{solution P0-VM}
P_{0}(z) =e^{\frac{\lambda}{\xi} z}(1-z)^{-\frac{\gamma}{\xi}}\left[P_{0}(0) -\frac{C_{\lambda, \mu}}{\xi}\int_{s=0}^{z}e^{- \frac{\lambda}{\xi} z}(1-s)^{\frac{\gamma}{\xi} -1}ds \right],
\end{equation}
where $C_{\lambda, \mu} = \mu p_{1,1} + \gamma p_{0,0}$. Taking the limit as $z \rightarrow 1$ in the above equation and using the fact that $\lim_{z \rightarrow 1}(1-z)^{-\frac{\gamma}{\xi}} = +\infty$ yields

\begin{equation}
\label{p00-en-fct-de-p11}
P_{0}(0) = P_{0,0} = \frac{C_{\lambda, \mu}}{\xi}A,
\end{equation}
where $A(.)$ is given by \eqref{A}. Substituting $C_{\lambda, \mu} = \frac{\xi}{A(1)} p_{0,0}$ in Equation~\eqref{solution P0-VM} gives

\begin{equation}
\label{P0-VM}
P_{0}(z) =e^{\lambda/\xi z}(1-z)^{-\gamma/\xi}p_{0,0}\left[1 -\frac{A(z)}{A(1)} \right].
\end{equation}

Next we will find the generating function $P_{1}(.)$ by solving Equation~\eqref{equa-diff1-VM} which can be written as

\begin{equation*}
P_{1}'(z) - \left[ \frac{\lambda}{\xi} - \frac{1}{z}\left( \frac{\mu}{\xi} - 1\right) \right]P_{1}(z) = -\frac{\gamma}{\xi(1-z)}P_{0}(z) + \frac{\gamma}{\xi z (1-z)}p_{0,0} + \frac{\mu - \xi}{\xi z(1-z)}p_{1,1}.
\end{equation*}

\n
Multiplying both sides of the above equation by $e^{- \frac{\lambda}{\xi} z} z^{\frac{\mu}{\xi}-1}$ and integrating from $0$ to $z$

\begin{equation}
\label{P1-VM}
e^{-\frac{\lambda}{\xi} z} z^{\frac{\mu}{\xi}-1} P_{1}(z) = \int_{0}^{z} e^{-\frac{\lambda}{\xi} s} s^{\frac{\mu}{\xi}-1}\left[ -\frac{\gamma}{\xi(1-s)}P_{0}(s) + \frac{\gamma}{\xi s (1-s)}p_{0,0} + \frac{\mu - \xi}{\xi s(1-s)}p_{1,1} \right]ds.
\end{equation}

\n
Using the definition of $C_{\lambda,\mu}$ and Equation~\eqref{p00-en-fct-de-p11} we have 

\begin{equation}
\label{p11 en fonction de p00-VM}
 p_{1,1} = \frac{\xi - \gamma A}{\mu A} p_{0,0}
\end{equation}
Equations~\eqref{P0-VM} and \eqref{P1-VM} are expressed in termes of $P_{0,0}$. Therefore, once $p_{0,0}$ are calculated, $P_{0}(.)$ and $P_{1}(.)$ are completely determined.


\subsection{Calculation of \texorpdfstring{$p_{0,0}$}{Lg}}

In order to calculate the value of $p_{0,0}$ we have to evaluate the quantities $P_{0}(1)$ and $P_{1}(1)$ in termes of $P_{0,0}$ and using the norming condition $P_{0}(1) + P_{1}(1) = 1$. Using~(\ref{equa-diff0-VM}) with $z=1$ and Equation~\eqref{p11 en fonction de p00-VM} we get

\begin{equation}
P_{0}(1) = \frac{C_{\lambda, \mu}}{\gamma}.
\end{equation}
We have $C_{\lambda, \mu} = \frac{p_{0,0}\xi}{A}$, thus 
\begin{equation}
\label{P_0(1) en fonction  de p00-VM}
P_{0}(1) = \frac{\xi}{A\gamma}p_{0,0}.
\end{equation}

\n
Substituting Equation~\eqref{P0-VM} in \eqref{P1-VM} we obtain

\begin{equation*}
P_{1}(z) = e^{\frac{\lambda}{\xi}z}z^{1-\frac{\mu}{\xi}}\left[ -p_{0,0}\frac{\gamma}{\xi}B(z) + \frac{\gamma}{\xi}p_{0,0}E(z) + p_{1,1}\left( \frac{\mu}{\xi} -1\right) E(z) \right],
\end{equation*}
where $B(.)$ is given by \eqref{B} and $E(z) = \int_{0}^{z} e^{-\frac{\lambda}{\xi}s}s^{\frac{\mu}{\xi} -2}(1-s)^{-1} ds$. Using the above equation for $z=1$ with \eqref{p11 en fonction de p00-VM} we get

\begin{equation}
\label{P_1(1) en fonction de p00-VM}
P_{1}(1) = p_{0,0} e^{\frac{\lambda}{\xi}}\left[ -\frac{\gamma}{\xi}B(1) + \left( \frac{1}{A} - \frac{\xi}{A\mu} + \frac{\gamma}{\mu}\right) E(1) \right].
\end{equation}

\n
Substituting Equations~\eqref{P_0(1) en fonction  de p00-VM} and \eqref{P_1(1) en fonction de p00-VM} in the norming condition yields

\begin{equation}
\label{p00-VM} 
p_{0,0} = \left[ \frac{\xi}{A\gamma} + e^{\frac{\lambda}{\xi}}\left(-\frac{\gamma}{\xi}B + \left( \frac{1}{A} - \frac{\xi}{A\mu} + \frac{\gamma}{\mu}\right)E \right) \right]^{-1}
\end{equation}


\subsection{Performance measures}

\subsubsection{Probability that the server is on vacation and probability that the server is busy}

The probability that the server is on vacation is
\begin{equation*}
P_{vac} = \mathbb{P}(J=0) = P_{0}(1) = p_{0,0}\frac{\xi}{\gamma A}.
\end{equation*}

The probability that the server is serving customers is
\begin{equation}
P_{ser} = \mathbb{P}(J=1) = 1 - P_{vac} = 1 - p_{0,0}\frac{\xi}{\gamma A},
\end{equation}
where $p_{0,0}$ is given by \eqref{p00-VM}.

\subsubsection{Mean number of customers when the server in on vacation and busy}

As given by \cite{AltmanYechiali2006}( Equation 2.16), the mean number od customers whenthe serveur is busy is

\begin{equation}
 \mathbb{E}(N_{0}) = P'_{0}(1) = \frac{\lambda}{\gamma + \xi} P_{0}(1).
\end{equation}

Next we derive the mean number of customers in the system when the server is busy. From Equation~\eqref{equa-diff1-VM} we have using the l'Hopital's rule

\begin{align*}
\begin{split}
 P'_{1}(1) &= \lim_{z \rightarrow 1} \dfrac{\left[\lambda(1-2z)+\mu  -\xi\right]P_{1}(z) + \left[(\lambda z - \mu)(1-z) + \xi(1-z) \right]P'_{1}(z)- \gamma P_{0}(z) - \gamma z P'_{0}(z) }{\xi(1-2z)}\\
        &= \dfrac{\left[\lambda(1-2)+\mu  -\xi\right]P_{1}(1) - \gamma P_{0}(1) - \gamma P'_{0}(1) }{\xi(1-2z)}
\end{split}\\
        &= \dfrac{(-\lambda + \mu - \xi)P_{1}(1)- \gamma P_{0}(1)} {\xi - \gamma},
\end{align*}
where $P_{0}(1)$ and $P_{1}(1)$ are given respectively by \eqref{P_0(1) en fonction  de p00-VM} and \eqref{P_1(1) en fonction de p00-VM}.

\subsubsection{Sojourn time}
Let $S$ and $S_{j,n}$ be defined as in section~\ref{section-perf-sojour time}. The expression $\mathbb{E}(S_{1,n})$ is the same as Equation~\eqref{E(S_{1,n})} but for $n \geq 1$ and $\mathbb{E}(S_{0,n})$ is given by Equation~\eqref{E(S_{0,n})}.

\section{Analysis of \texorpdfstring{$M/M/c/V_S + M $, $1 \leq c < \infty$}{Lg}}
\label{section3}

\subsection{Model description and notation}

We consider a multi-server queueing system $M/M/c$ with $c$ identical servers and an unlimited single waiting line. Customers arrive at the system according to a Poisson process with rate $\lambda$ and are served with an FCFS (first come, first served) discipline. The service times of the customers are independent exponentially distributed random variables with common mean $\mu^{-1}$. If a server find an empty waiting line at service completion instant, it takes single vacation period. The vacation durations of servers are exponentially distributed random variables with mean $\gamma^{-1}$. Assume furthermore the customers are impatient regardless the state of the server. If the service of a customer is not started before its patience runs out, then he leaves the system and never returns. The patience times of the customers are i.i.d. random variables exponentially distributed with mean $\xi^{-1}$.

The system is again represented by a two-dimensional continuous Markov process $\{ J(t),N(t) \}_{t \geq 0}$ with state space $\mathcal{S} = \{(j,n): j=0,1,\ldots , c, \, n \geq 0 \}$. As the previous study, $N(t)$ represents the number of customers present in the system at time $t$, but here $J(t)$ is the number of operating servers (busy or Idle). The stationary probabilities of the Markov process are define by

\begin{equation*}
  p_{j,n} = \mathbb{P}(J(t) = j, N(t) = n), \; j=0, \ldots , c , \;\; n \geq 0.
\end{equation*}

\subsection{Equilibrium state distribution}

For $J=0$, i.e. all the servers are on vacation.  The (partial) transition-rate diagram is depicted in Figure~\ref{figureTransRate3}, and the balance equations are given as follow

\begin{equation}
\label{eq-balance0c}
 \begin{array}{lll}
              n = 0 \quad (\lambda + c\gamma )p_{0,0} =  \xi p_{0,1} + \mu p_{1,1} \\ \\
              n\geq 1 \quad (\lambda + c\gamma + n \xi)p_{0,n}= \lambda p_{0,n-1} + (n+1)\xi p_{0,n+1}.
                 \end{array}          
\end{equation}

\begin{figure}[h!]
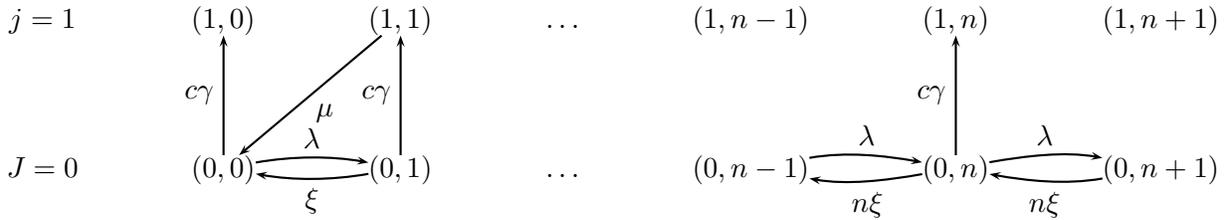

\begin{center}
\psmatrix[fillstyle=solid]
$j=1$ & $(1,0)$   & $(1,1)$   & $\ldots$ & $(1,n-1)$ & $(1,n)$ & $(1,n+1)$ \\
$J=0$ & $(0,0)$   & $(0,1)$   & $\ldots$ & $(0,n-1)$ & $(0,n)$ & $(0,n+1)$
\endpsmatrix
\psset{linecolor=black, arrows=->, labelsep=1mm,shortput=nab}

\ncarc[arcangle=10]{2,2}{2,3}^{$\lambda$}
\ncline{2,2}{1,2}^{$c\gamma$}
\ncarc[arcangle=10]{2,3}{2,2}^{$\xi$}
\ncline{2,3}{1,3}^{$c\gamma$}
\ncline{1,3}{2,2}^{$\mu$}

\ncarc[arcangle=10]{2,6}{2,7}^{$\lambda$}
\ncline{2,6}{1,6}^{$c\gamma$}
\ncarc[arcangle=10]{2,6}{2,5}^{$n\xi$}
\ncarc[arcangle=10]{2,5}{2,6}^{$\lambda$}
\ncarc[arcangle=10]{2,7}{2,6}^{$n\xi$}

\caption{Transition rate diagram of \texorpdfstring{$(J(t),N(t))$}{Lg} when the servers take single vacation}
\label{figureTransRate3}
\end{center}
\end{figure}

\n\\
For $J=j$ with $1 \leq j \leq c-1$, i.e. $j$ servers are in operating state (serving cutomers or idle waiting for service). The transition rate-diagram is depicted in Figure~\ref{figureTransRate4} for $0 \leq j \leq 3$ and the balance equations are given by

\begin{figure}[h!]
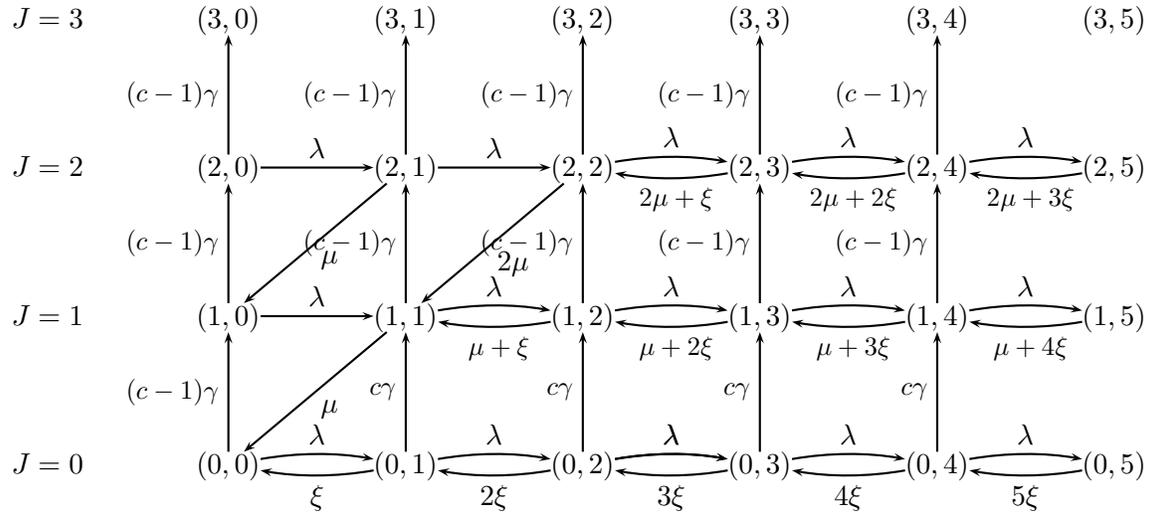

\begin{center}
\psmatrix[fillstyle=solid]
$J=3$ &  $(3,0)$   & $(3,1)$   & $(3,2)$ & $(3,3)$ & $(3,4)$ & $(3,5)$ \\
$J=2$ &  $(2,0)$   & $(2,1)$   & $(2,2)$ & $(2,3)$ & $(2,4)$ & $(2,5)$ \\
$J=1$ & $(1,0)$   & $(1,1)$   & $(1,2)$ & $(1,3)$ & $(1,4)$ & $(1,5)$ \\
$J=0$ & $(0,0)$   & $(0,1)$   & $(0,2)$ & $(0,3)$ & $(0,4)$ & $(0,5)$ 
\endpsmatrix
\psset{linecolor=black, arrows=->, labelsep=1mm,shortput=nab}

\ncarc[arcangle=10]{4,2}{4,3}^{$\lambda$}
\ncline{4,2}{3,2}^{{\small $(c-1)\gamma$}}

\ncarc[arcangle=10]{4,3}{4,2}^{$\xi$}
\ncline{4,3}{3,3}^{{\small $c\gamma$}}
\ncarc[arcangle=10]{4,4}{4,5}^{$\lambda$}
\ncarc[arcangle=10]{4,3}{4,4}^{$\lambda$}

\ncarc[arcangle=10]{4,4}{4,5}^{$\lambda$}
\ncarc[arcangle=10]{4,4}{4,3}^{$2\xi$}
\ncline{4,4}{3,4}^{{\small $c\gamma$}}
\ncarc[arcangle=10]{4,5}{4,4}^{$3\xi$}

\ncline{4,5}{3,5}^{{\small $c\gamma$}}
\ncarc[arcangle=10]{4,5}{4,6}^{$\lambda$}
\ncarc[arcangle=10]{4,6}{4,5}^{$4\xi$}

\ncline{4,6}{3,6}^{{\small $c\gamma$}}
\ncarc[arcangle=10]{4,6}{4,7}^{$\lambda$}
\ncarc[arcangle=10]{4,7}{4,6}^{$5\xi$}

\ncline{3,2}{3,3}^{$\lambda$}

\ncarc[arcangle=10]{3,3}{3,4}^{$\lambda$}
\ncarc[arcangle=10]{3,4}{3,5}^{$\lambda$}
\ncarc[arcangle=10]{3,5}{3,6}^{$\lambda$}
\ncarc[arcangle=10]{3,6}{3,7}^{$\lambda$}
\ncline{3,3}{4,2}^{$\mu$}
\ncarc[arcangle=10]{3,4}{3,3}^{{ \small $\mu +\xi$}}
\ncarc[arcangle=10]{3,5}{3,4}^{{ \small $\mu +2\xi$}}
\ncarc[arcangle=10]{3,6}{3,5}^{{ \small $\mu +3\xi$}}
\ncarc[arcangle=10]{3,7}{3,6}^{{ \small $\mu +4\xi$}}
\ncline{3,2}{2,2}^{{\small $(c-1)\gamma$}}
\ncline{3,3}{2,3}^{{\small $(c-1)\gamma$}}
\ncline{3,4}{2,4}^{{\small $(c-1)\gamma$}}
\ncline{3,5}{2,5}^{{\small $(c-1)\gamma$}}
\ncline{3,6}{2,6}^{{\small $(c-1)\gamma$}}

\ncline{2,2}{2,3}^{$\lambda$}
\ncline{2,3}{2,4}^{$\lambda$}
\ncarc[arcangle=10]{2,4}{2,5}^{$\lambda$}
\ncarc[arcangle=10]{2,5}{2,6}^{$\lambda$}
\ncarc[arcangle=10]{2,6}{2,7}^{$\lambda$}
\ncline{2,3}{3,2}^{$\mu$}
\ncline{2,4}{3,3}^{$2\mu$}
\ncarc[arcangle=10]{2,5}{2,4}^{{ \small $2\mu +\xi$}}
\ncarc[arcangle=10]{2,6}{2,5}^{{ \small $2\mu +2\xi$}}
\ncarc[arcangle=10]{2,7}{2,6}^{{ \small $2\mu +3\xi$}}
\ncline{2,2}{1,2}^{{\small $(c-1)\gamma$}}
\ncline{2,3}{1,3}^{{\small $(c-1)\gamma$}}
\ncline{2,4}{1,4}^{{\small $(c-1)\gamma$}}
\ncline{2,5}{1,5}^{{\small $(c-1)\gamma$}}
\ncline{2,6}{1,6}^{{\small $(c-1)\gamma$}}
\caption{Transition rate diagram of \texorpdfstring{$(J(t),N(t))$}{Lg} when the servers take single vacation}
\label{figureTransRate4}
\end{center}
\end{figure}

\begin{equation}
\label{eq-balancejc}
\begin{array}{lllll}
                    n = 0 \quad (\lambda + \gamma (c-j) )p_{j,0} =  \gamma (c-j+1) p_{j-1,0} + \mu p_{j+1,1} \\ \\
      1\leq n \leq j-1 \quad (\lambda + (c-j)\gamma + n \mu)p_{j,n}= \lambda p_{j,n-1} + (c-j+1)\gamma p_{j-1,n} + (n+1)\mu p_{j+1,n+1}\\ \\
      n=j \quad (\lambda + j\mu + (c-j)\gamma)p_{j,n} = \lambda p_{j,n-1} + (c-j+1)\gamma p_{j-1,n} + (j\mu + \xi)p_{j,n+1} (j+1)\mu p_{j+1,n+1}\\ \\
      n > j \quad (\lambda + j\mu + (c-j)\gamma + (n-j)\xi)p_{j,n} = \lambda p_{j,n-1} + (c-j+1)\gamma p_{j-1,n} + \left[ j \mu + (n+1-j)\xi \right]p_{j,n+1}
                 \end{array}        
\end{equation}

%
%
%
%
%

\begin{equation}
\label{eq-balancecc}
 J=c \left\{  \begin{array}{llll}
              n = 0 \quad \lambda p_{c,0} =  \gamma p_{c-1,0} \\
              1 \leq n \leq c-1 \quad (\lambda + n \mu)p_{c,n}= \lambda p_{c,n-1} + \gamma p_{c-1,n}\\
             n=c \quad (\lambda + c\mu)p_{c,n} = \lambda p_{c,n-1} + \gamma p_{c-1,n} + (c\mu + \xi)p_{c, n+1}\\
             n > c \quad (\lambda + c\mu + (n-c)\xi)p_{c,n} = \lambda p_{c, n-1} + \gamma p_{c-1,n} + \left[ c\mu + (n+1-c)\xi \right]p_{c,n+1}
                 \end{array}
        \right.          
\end{equation}

We define for every $0 \leq j \leq c$ the partial generating function $P_{j}(.)$

\begin{equation*}
P_{j}(z) = \sum_{n=0}^{\infty} z^{n} p_{j,n}
\end{equation*}

By multiplying by $z^n$ and summing over $n$, we obtain

\n\\
from (\ref{eq-balance0c}), for $j=0$

\begin{equation}
\label{equa diff0c}
\xi(1-z)P^{'}_{0}(z) = \left[ \lambda(1-z) + c\gamma \right]P_{0}(z) - \mu p_{1,1},
\end{equation}

\n
from (\ref{eq-balancejc}), for $1 \leq j \leq c-1$

\begin{align}
\label{equa diffjc}
\begin{split}
&\left[ ( \lambda z - \mu j)(1-z) + (c-j)\gamma z + j\xi (1-z) \right]P_{j}(z) - \xi z (1-z)P^{'}_{j}(z)\\
 &= (c-j+1)\gamma z P_{j-1}(z) + j \mu (1-z)\sum_{n=0}^{j}z^{n}p_{j,n} - \mu z \sum_{n=1}^{j} n z^{n}p_{j,n} \\
 & \qquad + \mu \sum_{n=1}^{j+1} n z^{n} p_{j+1,n} +j\xi (1-z) \sum_{n=0}^{j} z^{n}p_{j,n},
\end{split}
\end{align}

\n
from (\ref{eq-balancecc}), for $j=c$

\begin{align}
\label{equa diffcc}
\begin{split}
&\left[ ( \lambda z - \mu j)(1-z) + c\xi (1-z) \right]P_{c}(z) - \xi z (1-z)P^{'}_{c}(z)\\
 &= \gamma z P_{c-1}(z) - \mu z \sum_{n=1}^{c}nz^{n}p_{c,n} + c(1-z)(\xi - \mu)\sum_{n=0}^{c}z^{n}p_{c,n} - \xi (1-z)\sum_{n=1}^{c} n z^{n} p_{c,n}.
\end{split}
\end{align}

By setting $z=1$ in Equation (\ref{eq-balance0c}), (\ref{eq-balancejc}) and (\ref{eq-balancecc}) we get 

\n
for $j=0$, $P_{0}(1) = \frac{\mu}{c \gamma} p_{1,1}$,

\n
for $1 \leq j \leq c-1$: $(c-j+1)\gamma P_{j}(1) = (c-j+1)\gamma P_{j-1}(1) - \mu \sum _{n=1}^{j} n p_{j,n} + \mu \sum _{n=1}^{j+1} n p_{j+1,n}$ and $\gamma P_{c-1}(1) =  \mu \sum _{n=1}^{c} n p_{c,n}$, and by repeated substitution, we get
\begin{equation}
\label{P_{jc}(1)}
(c-j+1)\gamma P_{j}(1) =  \mu \sum _{n=1}^{j+1} n p_{j+1,n}
\end{equation}

\n\\
The differential equation (\ref{equa diff0c}) is similar to the differential equation (\ref{eq diff0}), so

\begin{equation}
\label{P0c}
P_{0}(z)  = p_{0,0} e^{\frac{\lambda}{\xi}z} (1-z)^{-\frac{c\gamma}{\xi}} \left[ 1 - \dfrac{A_{c}(z)}{A_{c}} \right],
\end{equation}
where $A_{c}(z) = \int_{0}^{z} e^{-\frac{\lambda}{\xi}s} (1-s)^{\frac{c\gamma}{\xi} - 1} ds,$ $A_{c} = A_{c}(1)$,
\begin{equation}
\label{p11c}
p_{1,1} = \frac{\xi}{\mu A_{c}} p_{0,0},
\end{equation}

\begin{equation}
\label{P_{0c}(1)}
P_{0}(1) = \frac{\xi}{c \gamma A_{c}} p_{0,0}.
\end{equation}
Once $p_{0,0}$ is calculated the partial generating function $P_{0}(z)$ is completely determined. 

\n
Dividing (\ref{equa diffjc}) by $-\xi z (1-z)$ yields

\begin{align}
\label{equa diffjc bis}
\begin{split}
&P'_{j}(z) - \left[ \frac{\lambda}{\xi} - \frac{1}{z}\left( \frac{\mu j}{\xi} - j\right) + \frac{(c-j)\gamma}{\xi}\frac{1}{1-z} \right]P_{j}(z) \\
 &= -\frac{(c-j+1)}{\xi (1-z)}\gamma P_{j-1}(z) - \frac{j \mu}{\xi z}\sum_{n=0}^{j}z^{n}p_{j,n} + \frac{\mu}{\xi (1-z)} \sum_{n=1}^{j} n z^{n}p_{j,n} \\
 & \qquad - \frac{\mu}{\xi z (1-z)}\sum_{n=1}^{j+1} n z^{n} p_{j+1,n} - \frac{j}{z} \sum_{n=0}^{j} z^{n}p_{j,n},
\end{split}
\end{align}

\n
Dividing (\ref{equa diffcc}) by $-\xi z(1-z)$ yields

\begin{align}
\label{equa diffcc bis}
\begin{split}
& P^{'}_{c}(z) - \left[ ( \frac{\lambda}{\xi} - \frac{1}{z}\left( \frac{\mu j}{\xi} - c\right) \right]P_{c}(z) \\
 &= - \frac{\gamma}{\xi (1-z)} P_{c-1}(z) + \frac{\mu}{\xi (1-z)} \sum_{n=1}^{c}nz^{n}p_{c,n} - \frac{c(\xi - \mu)}{\xi z}\sum_{n=0}^{c}z^{n}p_{c,n} + \frac{1}{z}\sum_{n=1}^{c} n z^{n} p_{c,n}.
\end{split}
\end{align}

\n
The set of equations (\ref{equa diffjc bis}) and (\ref{equa diffcc bis}) allow us to solve recursively  the differential equations using similar technic as in previous section. Once the unknown probabilities $\{ p_{j,n} \}$, for $j=1,\ldots , c$, $n =0, \ldots ,j$ and $P_{j}(1)$, for $j=0,\ldots , c$ all the partial differential equations are completely determinded.

Multiplying both sides of (\ref{equa diffjc bis}) by $e^{-\frac{\lambda}{\xi}z} z^{(\frac{\mu j}{\xi} - j)(1-z)^{(c-j)\gamma / \xi}}$ and integrating we get

\begin{align*}
& e^{-\frac{\lambda}{\xi}z} z^{(\frac{\mu j}{\xi} - j)(1-z)^{(c-j)\gamma / \xi}} P_{j}(z)\\
&= -\frac{(c-j+1)\gamma}{\xi}\int_{0}^{z} e^{-\frac{\lambda}{\xi}s} s^{(\frac{\mu j}{\xi} - j)}(1-s)^{(c-j)\gamma / \xi - 1}P_{j-1}(s)ds\\
&= -j\left( 1+\frac{\mu}{\xi} \right)\sum_{n=0}^{j}p_{j,n} \int_{0}^{z} e^{-\frac{\lambda}{\xi}s} s^{(\frac{\mu j}{\xi} +n - 2)}(1-s)^{(c-j)\gamma / \xi }ds\\
&=\frac{\mu}{\xi}\sum_{n=1}^{j} n p_{j,n}\int_{0}^{z} e^{-\frac{\lambda}{\xi}s} s^{(\frac{\mu j}{\xi} +n - 1)}(1-s)^{(c-j)\gamma / \xi -1 }ds\\
&=-\frac{\mu}{\xi}\sum_{n=1}^{j+1} n p_{j+1,n}\int_{0}^{z} e^{-\frac{\lambda}{\xi}s} s^{(\frac{\mu j}{\xi} +n - 2)}(1-s)^{(c-j)\gamma / \xi -1 }ds.
\end{align*}

Thus,

\begin{equation}
\label{Pjc}
\begin{split}
 P_{j}(z) &= e^{\frac{\lambda}{\xi}z} z^{-(\frac{\mu j}{\xi} - 1)}(1-z)^{-\frac{(c-j)\gamma}{\xi}} \{
 -\frac{(c-j+1)\gamma}{\xi} k_{1}(z) -j\left( 1+\frac{\mu}{\xi} \right) \sum_{n=0}^{j}p_{j,n}k_{2}(z)  + \frac{\mu}{\xi}\sum_{n=1}^{j} n p_{j,n} k_{3}(z)\\ 
 &\qquad -\frac{\mu}{\xi}\sum_{n=1}^{j+1} n p_{j+1,n} k_{4}(z)\}.
 \end{split}
\end{equation}

Solve Equation (\ref{equa diffcc bis}) yields by multiplying by $e^{-\frac{\lambda}{\xi}z} z^{(\frac{\mu j}{\xi} - c)}$

\begin{equation*}
\begin{split}
P_{c}(z)&= e^{\frac{\lambda}{\xi}z} z^{-(\frac{\mu j}{\xi} - c)} \{ -\frac{\gamma}{\xi} \int_{0}^{z}e^{-\frac{\lambda}{\xi}s} s^{(\frac{\mu j}{\xi} - c +n)}(1-s)^{-1} P_{c-1}(s)ds + \frac{\mu}{\xi}\sum_{n=1}^{c} n p_{c,n} \int_{0}^{z}e^{-\frac{\lambda}{\xi}s} s^{(\frac{\mu j}{\xi} - c +n)}(1-s)^{-1}ds \\
&\qquad -  \frac{c(\xi - \mu)}{\xi} \sum_{n=0}^{c} p_{c,n} \int_{0}^{z}e^{-\frac{\lambda}{\xi}s} s^{(\frac{\mu j}{\xi} - c +n -1)}ds + \sum_{n=1}^{c}n p_{c,n}\int_{0}^{z}e^{-\frac{\lambda}{\xi}s} s^{(\frac{\mu j}{\xi} - c +n -1)}ds\}.
\end{split}
\end{equation*}

Thus

\begin{equation}
\label{Pcc}
\begin{split}
P_{c}(z)= e^{\frac{\lambda}{\xi}z} z^{-(\frac{\mu j}{\xi} - c)} \left\lbrace -\frac{\gamma}{\xi} \ell_{1}(z) + \frac{\mu}{\xi}\sum_{n=1}^{c} n p_{c,n} \ell_{2}(z) - \frac{c(\xi - \mu)}{\xi} \sum_{n=0}^{c} p_{c,n} \ell_{3}(z)+ \sum_{n=1}^{c}n p_{c,n}\ell_{3}(z)\right\rbrace.
\end{split}
\end{equation}

We get for $z=1$

\begin{equation}
\label{P_{cc}(1)}
P_{c}(1)= e^{\frac{\lambda}{\xi}} \left\lbrace -\frac{\gamma}{\xi} \ell_{1}^{c-1}(1) + \frac{\mu}{\xi}\sum_{n=1}^{c} n p_{c,n} \ell_{2}(1) - \frac{c(\xi - \mu)}{\xi} \sum_{n=0}^{c} p_{c,n} \ell_{3}(1)+ \sum_{n=1}^{c}n p_{c,n}\ell_{3}(1)\right\rbrace.
\end{equation}

Equations (\ref{P0c}), (\ref{Pjc}) and (\ref{Pcc}) express the partial PGF $P_{j}(z)$, $j=0, \ldots ,c$ in terms of $(c+1)(c+2)/2$ unknown probabilities $\{ p_{j,n} \}_{\substack{0 \leq j \leq c\\0\leq n \leq j}}$. Those probabilities are required in order to completely determine $P_{j}(z)$, $j=0, \ldots , c$. The primary idea is to express the $c(c+1)/2$ probabilities $\{ p_{j,n} \}_{\substack{1 \leq j \leq c\\0\leq n \leq j-1}}$ in terms of those $\{ p_{j,j} \}_{1 \leq j \leq c-1}$. The $c(c+1)/2$ equations connecting $\{ p_{j,n} \}_{\substack{0 \leq j \leq c\\0\leq n \leq c}}$ (except $p_{c,c}$) with $\{ p_{j,j} \}_{0 \leq j \leq c-1}$ are taken from the balance equations for $j=1, \ldots ,c$, $n=0, \ldots , j-1$. The Equation (\ref{p11c}) expresses $p_{1,1}$ in term of $p_{0,0}$, this implies that the probabilities $\{ p_{j,n} \}_{\substack{1 \leq j \leq c\\0\leq n \leq c}}$ (except $p_{c,c}$) may be expressed in terms of $\{ p_{j,j} \}_{\substack{0 \leq j \leq c-1 \\ j \neq 1}}$. For example, if $c=3$ the $6$ balance equations for $j=1, \ldots , 3$, $n=0, \ldots , j-1$ together with (\ref{p11c}) connect $p_{1,0}, p_{1,1}, p_{2,0}, p_{2,1}, p_{3,0}, p_{3,1}, p_{3,2}$ in terms of $p_{0,0}$ and/or  $p_{2,2}$.

The second idea consists in expressing the $\{ p_{j,j} \}_{2 \leq j \leq c}$ in term of $p_{0,0}$, so we have to find $c-1$ equations. Taking the limit in (\ref{Pjc}) yields for $j=1, \ldots , c-1$

\begin{equation}
\begin{split}
0 = -\frac{(c-j+1)\gamma}{\xi} k_{1}^{j-1}(1) -j\left( 1+\frac{\mu}{\xi} \right) \sum_{n=0}^{j}p_{j,n}k_{2}(1)  + \frac{\mu}{\xi}\sum_{n=1}^{j} n p_{j,n} k_{3}(1) -\frac{\mu}{\xi}\sum_{n=1}^{j+1} n p_{j+1,n} k_{4}(1)
 \end{split}
\end{equation}

since $0 < P_{j}(1) = \sum_{n=0}^{\infty} p_{j,n} < \infty$ and $\lim_{z \rightarrow 1}(1-z)^{- \frac{(c-j)\gamma}{\xi}} = + \infty$. These $c-1$ above equations gives the probabilities $p_{j,j}$, $j=2, \ldots , c$ in term of $p_{0,0}$.

Finally the norming equation $\sum_{j=0}^{c}P_{j}(1) = 1$ combining with (\ref{P_{jc}(1)}) and (\ref{P_{cc}(1)}) yields $p_{0,0}$ and thus all the unknown probabilities.

\bibliographystyle{plainnat}
\bibliography{biblio-markovian-queue}

\begin{thebibliography}{26}
\providecommand{\natexlab}[1]{#1}
\providecommand{\url}[1]{\texttt{#1}}
\expandafter\ifx\csname urlstyle\endcsname\relax
  \providecommand{\doi}[1]{doi: #1}\else
  \providecommand{\doi}{doi: \begingroup \urlstyle{rm}\Url}\fi

\bibitem[Ser(2002)]{ServiFinn2002}
M/m/1 queues with working vacations (m/m/1/wv).
\newblock \emph{Performance Evaluation}, 50\penalty0 (1):\penalty0 41 -- 52,
  2002.
\newblock ISSN 0166-5316.
\newblock \doi{http://dx.doi.org/10.1016/S0166-5316(02)00057-3}.

\bibitem[Altman and Yechiali(2006)]{AltmanYechiali2006}
Eitan Altman and Uri Yechiali.
\newblock Analysis of customers' impatience in queues with server vacations.
\newblock \emph{Queueing Syst.}, 52\penalty0 (4):\penalty0 261--279, 2006.
\newblock ISSN 0257-0130.
\newblock \doi{10.1007/s11134-006-6134-x}.
\newblock URL \url{http://dx.doi.org/10.1007/s11134-006-6134-x}.

\bibitem[Altman and Yechiali(2008)]{AltmanYechiali2008}
Eitan Altman and Uri Yechiali.
\newblock Infinite-server queues with system's additional tasks and impatient
  customers.
\newblock \emph{Probab. Engrg. Inform. Sci.}, 22\penalty0 (4):\penalty0
  477--493, 2008.
\newblock ISSN 0269-9648.
\newblock \doi{10.1007/978-1-4020-8741-7_57}.
\newblock URL \url{http://dx.doi.org/10.1007/978-1-4020-8741-7_57}.

\bibitem[Ancker and Gafarian(1963)]{AnckerI63}
C.~J. Ancker, Jr. and A.~V. Gafarian.
\newblock Some queuing problems with balking and reneging. {I}.
\newblock \emph{Operations Res.}, 11:\penalty0 88--100, 1963.
\newblock ISSN 0030-364X.

\bibitem[Bacelli et~al.(1984)Bacelli, Boyer, and H{\'e}buterne]{Baccelli84}
F.~Bacelli, P.~Boyer, and G.~H{\'e}buterne.
\newblock Single-server queues with impatient customers.
\newblock \emph{Adv. in Appl. Probab.}, 16\penalty0 (4):\penalty0 887--905,
  1984.
\newblock ISSN 0001-8678.
\newblock \doi{10.2307/1427345}.
\newblock URL \url{http://dx.doi.org/10.2307/1427345}.

\bibitem[Boots and Tijms(1999)]{Tijms1999}
Nam~Kyoo Boots and Henk Tijms.
\newblock An {$M/M/c$} queue with impatient customers.
\newblock \emph{Top}, 7\penalty0 (2):\penalty0 213--220, 1999.
\newblock ISSN 1134-5764.
\newblock \doi{10.1007/BF02564722}.
\newblock URL \url{http://dx.doi.org/10.1007/BF02564722}.
\newblock 1st International Workshop on Retrial Queues (Madrid, 1998).

\bibitem[Boyce and DiPrima(1965)]{BoyceDiprima}
William~E. Boyce and Richard~C. DiPrima.
\newblock \emph{Elementary differential equations and boundary value problems}.
\newblock John Wiley \& Sons, Inc., New York-London-Sydney, 1965.

\bibitem[Choi et~al.(2001)Choi, Kim, and Chung]{BaraChung2001}
Bong~Dae Choi, Bara Kim, and Jinmin Chung.
\newblock {$M/M/1$} queue with impatient customers of higher priority.
\newblock \emph{Queueing Syst.}, 38\penalty0 (1):\penalty0 49--66, 2001.
\newblock ISSN 0257-0130.
\newblock \doi{10.1023/A:1010820112080}.
\newblock URL \url{http://dx.doi.org/10.1023/A:1010820112080}.

\bibitem[Daley(1965)]{Daley65}
D.~J. Daley.
\newblock General customer impatience in the queue {$GI/G/1$}.
\newblock \emph{J. Appl. Probability}, 2:\penalty0 186--205, 1965.
\newblock ISSN 0021-9002.

\bibitem[Doshi(1986)]{Doshi1986}
B.~T. Doshi.
\newblock Queueing systems with vacations---a survey.
\newblock \emph{Queueing Systems Theory Appl.}, 1\penalty0 (1):\penalty0
  29--66, 1986.
\newblock ISSN 0257-0130.
\newblock \doi{10.1007/BF01149327}.
\newblock URL \url{http://dx.doi.org/10.1007/BF01149327}.

\bibitem[Economou and Kapodistria(2010)]{EconomouKapodistria2010}
Antonis Economou and Stella Kapodistria.
\newblock Synchronized abandonments in a single server unreliable queue.
\newblock \emph{European Journal of Operational Research}, 203\penalty0
  (1):\penalty0 143--155, 2010.

\bibitem[Haight(1957)]{Haight57}
Frank~A. Haight.
\newblock Queueing with balking.
\newblock \emph{Biometrika}, 44:\penalty0 360--369, 1957.

\bibitem[Katayama(2011)]{Katayama2011}
Tsuyoshi Katayama.
\newblock Some results for vacation systems with sojourn time limits.
\newblock \emph{J. Appl. Probab.}, 48\penalty0 (3):\penalty0 679--687, 2011.
\newblock ISSN 0021-9002.

\bibitem[Moyal(2010)]{Moyal2010}
Pascal Moyal.
\newblock The queue with impatience: construction of the stationary workload
  under {FIFO}.
\newblock \emph{J. Appl. Probab.}, 47\penalty0 (2):\penalty0 498--512, 2010.
\newblock ISSN 0021-9002.
\newblock \doi{10.1239/jap/1276784905}.
\newblock URL \url{http://dx.doi.org/10.1239/jap/1276784905}.

\bibitem[Moyal(2013)]{Moyal2013}
Pascal Moyal.
\newblock On queues with impatience: stability, and the optimality of earliest
  deadline first.
\newblock \emph{Queueing Syst.}, 75\penalty0 (2-4):\penalty0 211--242, 2013.
\newblock ISSN 0257-0130.
\newblock \doi{10.1007/s11134-013-9342-1}.
\newblock URL \url{http://dx.doi.org/10.1007/s11134-013-9342-1}.

\bibitem[Perel and Yechiali(2010)]{PerelYechiali2010}
Nir Perel and Uri Yechiali.
\newblock Queues with slow servers and impatient customers.
\newblock \emph{European J. Oper. Res.}, 201\penalty0 (1):\penalty0 247--258,
  2010.
\newblock ISSN 0377-2217.
\newblock \doi{10.1016/j.ejor.2009.02.024}.
\newblock URL \url{http://dx.doi.org/10.1016/j.ejor.2009.02.024}.

\bibitem[Sakuma and Inoie(2012)]{Sakuma2012}
Yutaka Sakuma and Atsushi Inoie.
\newblock Stationary distribution of a multi-server vacation queue with
  constant impatient times.
\newblock \emph{Oper. Res. Lett.}, 40\penalty0 (4):\penalty0 239--243, 2012.
\newblock ISSN 0167-6377.
\newblock \doi{10.1016/j.orl.2012.03.006}.
\newblock URL \url{http://dx.doi.org/10.1016/j.orl.2012.03.006}.

\bibitem[Selvaraju and Goswami(2013)]{SelvarajuGoswami2013}
N.~Selvaraju and Cosmika Goswami.
\newblock Impatient customers in an m/m/1 queue with single and multiple
  working vacations.
\newblock \emph{Computers \& Industrial Engineering}, 65\penalty0 (2):\penalty0
  207 -- 215, 2013.
\newblock ISSN 0360-8352.
\newblock \doi{http://dx.doi.org/10.1016/j.cie.2013.02.016}.
\newblock URL
  \url{http://www.sciencedirect.com/science/article/pii/S0360835213000673}.

\bibitem[Stanford(1979)]{Stanford79}
Robert~E. Stanford.
\newblock Reneging phenomena in single channel queues.
\newblock \emph{Math. Oper. Res.}, 4\penalty0 (2):\penalty0 162--178, 1979.
\newblock ISSN 0364-765X.
\newblock \doi{10.1287/moor.4.2.162}.
\newblock URL \url{http://dx.doi.org/10.1287/moor.4.2.162}.

\bibitem[Sudhesh(2010)]{Sudhesh2010}
R.~Sudhesh.
\newblock Transient analysis of a queue with system disasters and customer
  impatience.
\newblock \emph{Queueing Syst.}, 66\penalty0 (1):\penalty0 95--105, 2010.
\newblock ISSN 0257-0130.
\newblock \doi{10.1007/s11134-010-9186-x}.
\newblock URL \url{http://dx.doi.org/10.1007/s11134-010-9186-x}.

\bibitem[Takine and Hasegawa(1990)]{TakineHasegawa90}
T.~Takine and T.~Hasegawa.
\newblock A note on {$M/G/1$} vacation systems with waiting time limits.
\newblock \emph{Adv. in Appl. Probab.}, 22\penalty0 (2):\penalty0 513--518,
  1990.
\newblock ISSN 0001-8678.
\newblock \doi{10.2307/1427557}.
\newblock URL \url{http://dx.doi.org/10.2307/1427557}.

\bibitem[Tian and Zhang(2006)]{TianZhang}
Naishuo Tian and Zhe~George Zhang.
\newblock \emph{Vacation queueing models}.
\newblock International Series in Operations Research \& Management Science,
  93. Springer, New York, 2006.
\newblock ISBN 978-0387-33721-0; 0-387-33721-0.
\newblock Theory and applications.

\bibitem[van~der Duyn~Schouten(1978)]{vanderDuynSchouten78}
F.~A. van~der Duyn~Schouten.
\newblock An {$M/G/1$} queueing model with vacation times.
\newblock \emph{Z. Oper. Res. Ser. A-B}, 22\penalty0 (3):\penalty0 A95--A105,
  1978.
\newblock ISSN 0340-9422.

\bibitem[Yechiali(2007)]{Yechiali2007}
Uri Yechiali.
\newblock Queues with system disasters and impatient customers when system is
  down.
\newblock \emph{Queueing Syst.}, 56\penalty0 (3-4):\penalty0 195--202, 2007.
\newblock ISSN 0257-0130.
\newblock \doi{10.1007/s11134-007-9031-z}.
\newblock URL \url{http://dx.doi.org/10.1007/s11134-007-9031-z}.

\bibitem[Yue et~al.(2014)Yue, Yue, Saffer, and Chen]{YueYueSaffer2014}
Dequan Yue, Wuyi Yue, Zsolt Saffer, and Xiaohong Chen.
\newblock Analysis of an {M}/{M}/1 queueing system with impatient customers and
  a variant of multiple vacation policy.
\newblock \emph{J. Ind. Manag. Optim.}, 10\penalty0 (1):\penalty0 89--112,
  2014.
\newblock ISSN 1547-5816.
\newblock \doi{10.3934/jimo.2014.10.89}.
\newblock URL \url{http://dx.doi.org/10.3934/jimo.2014.10.89}.

\bibitem[Yue et~al.(2016)Yue, Yue, and Zhao]{YueYueZhao2015}
Dequan Yue, Wuyi Yue, and Guoxi Zhao.
\newblock Analysis of an {$M/M/1$} queue with vacations and impatience timers
  which depend on the server's states.
\newblock \emph{J. Ind. Manag. Optim.}, 12\penalty0 (2):\penalty0 653--666,
  2016.
\newblock ISSN 1547-5816.
\newblock \doi{10.3934/jimo.2016.12.653}.
\newblock URL \url{http://dx.doi.org/10.3934/jimo.2016.12.653}.

\end{thebibliography}

\end{document}